\documentclass[12pt,a4paper]{amsart}
\pdfoutput=1

\usepackage{amsmath,amssymb,amsthm}  
\usepackage{mathtools}
\usepackage{tikz}
\usetikzlibrary{arrows}
\usepackage{thmtools, thm-restate}

\usepackage{xcolor} 	
\usepackage{hyperref}
\hypersetup{
	colorlinks,
    linkcolor={red!60!black},
    citecolor={green!60!black},
    urlcolor={blue!60!black},
}
\usepackage[abbrev, msc-links]{amsrefs} 
\usepackage{comment}

\usepackage[utf8]{inputenc}
\usepackage[T1]{fontenc}
\usepackage{lmodern}
\usepackage[babel]{microtype}
\usepackage[english]{babel}

\linespread{1.19}
\usepackage{geometry}
\geometry{left=25mm,right=25mm, top=22.5mm, bottom=22.5mm}

\usepackage{enumitem}

\theoremstyle{plain}
\newtheorem{thm}{Theorem}[section]

\newtheorem{cor}[thm]{Corollary}
\newtheorem{lem}[thm]{Lemma}

\newtheorem{quest}[thm]{Question}

\theoremstyle{definition}

\title{Greedoids from flames}

\author{Attila Jo\'{o}}
\thanks{The author would like to thank the generous support of the Alexander 
von Humboldt Foundation and NKFIH 
OTKA-129211}
\address{Attila Jo\'{o},
University of Hamburg, Department of Mathematics, Bundesstra{\ss}e 55 (Geomatikum), 20146 Hamburg, Germany}
\email{attila.joo@uni-hamburg.de}
\address{Attila Jo\'{o},
Alfr\'{e}d R\'{e}nyi Institute of Mathematics, Set theory and general topology research division, 13-15 Re\'{a}ltanoda St., 
Budapest, Hungary}
\email{jooattila@renyi.hu}

\keywords{greedoid, edge-connectivity, rooted digraph, strongly polynomial algorithm}
\subjclass[2020]{Primary: 05C20, 05B35, 05C40. Secondary: 05C35, 05C85} 
\begin{document}

\begin{abstract}
A digraph $ D $ with $ r\in V(D) $  is an $ r $-flame if for every $ {v\in V(D)-r} $, the  in-degree of $ v $ is  equal to 
the local edge-connectivity $ \lambda_D(r,v) $. We show that for every digraph $ D $ and $ r\in V(D) $, the  edge sets of the 
$ r $-flame 
subgraphs of $ D $ form a greedoid. Our method yields a new proof of Lovász' theorem stating: for every 
digraph $ D $ and $ r\in 
V(D) $, there is an  $ r $-flame subdigraph $ F $ of $ D $ such that  $ \lambda_F(r,v) =\lambda_D(r,v) $ for $ v\in V(D)-r $.  
We also give a strongly polynomial algorithm to find such an $ F $ working with a fractional generalization of Lovász' theorem.
\end{abstract}

\maketitle

\section{Introduction}

Subgraphs preserving some connectivity properties while having as few edges as possible have been a subject of 
interest since the beginning of graph theory. Suppose that $ D $ is a digraph with $ r\in V(D) $ and let us denote  the local 
edge-connectivity\footnote{The local edge-connectivity from $ r $ to  $ v $ is the maximal 
number of pairwise edge-disjoint $ r\rightarrow 
v $ 
paths.} from $ r $ to  some  $ v\in V(D)-r $ by $ \lambda_D(r,v) $.
We are looking 
for a spanning subgraph $ H $  of 
$ D $ with the smallest possible number of edges in which all the local edge-connectivities outwards from the root $ r $ are the 
same as in $ D $, i.e., $ 
\lambda_H(r,v)=\lambda_D(r,v) 
$ for all $ v\in V(D)-r $. In order to have $ \lambda_D(r,v) $ many pairwise edge-disjoint paths from $ r $ to $ v $ in $ H $, it is 
obviously necessary that the in-degree $ \varrho_H(v) $ of $ v $ in $ H $ is at least $ \lambda_D(r,v)  $. This leads to the 
estimation 
$ \left|E(H)\right|\geq \sum_{v\in V(D)-r}\lambda_D(r,v) $. It was shown by Lovász that, maybe surprisingly, this trivial 
lower bound is always sharp. 

\begin{thm}[Lov\'asz, Theorem 2 of \cite{lovasz}]\label{large flame}
For every digraph $ D $  and $ r\in V(D) $,  there is a spanning subdigraph $ H $ of $ D $ such that for every $ v\in 
V(D)-r $
\[  \lambda_D(r,v)=\lambda_{H}(r,v)= \varrho_H(v).\]
\end{thm}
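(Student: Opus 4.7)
The plan is to prove Lov\'asz' theorem by an iterative augmentation, in the spirit of the greedoid structure mentioned in the abstract. I would start from the empty spanning subdigraph $H_0 = (V(D), \emptyset)$, which is vacuously an $r$-flame, and add edges one at a time while preserving the $r$-flame property. Once $\lambda_H(r,v) = \lambda_D(r,v)$ for every $v \in V(D)-r$, the flame condition immediately yields $\varrho_H(v) = \lambda_H(r,v) = \lambda_D(r,v)$, which is the claim.

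The core is the following augmentation lemma: if $H$ is an $r$-flame subdigraph of $D$ with $\lambda_H(r,v) < \lambda_D(r,v)$ for some $v \in V(D)-r$, then there exists $e \in E(D) \setminus E(H)$ such that $H+e$ is again an $r$-flame. First I would observe that if $e=(x,y)$, then the flame condition at every $u \neq y$ is automatic: using monotonicity of $\lambda$ under edge-addition and the Menger bound $\lambda \leq \varrho$, the chain $\lambda_H(r,u) \leq \lambda_{H+e}(r,u) \leq \varrho_{H+e}(u) = \varrho_H(u) = \lambda_H(r,u)$ collapses to equalities. So the genuine condition on $e$ is that $\lambda_{H+e}(r,y) = \lambda_H(r,y)+1$, equivalently that $e$ crosses every minimum $r \to y$ cut of $H$.

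To produce such an $e$, I would fix a witness $v$ and pick the inclusion-wise largest $X \subseteq V(D)$ with $r \in X$, $v \notin X$, and $d_H^+(X) = \lambda_H(r,v)$; the family of minimum $r \to v$ cuts is a sublattice of $2^{V(D)}$ under $\cup$ and $\cap$ by submodularity of the out-cut function, so such a largest $X$ exists and is unique. Because $d_D^+(X) \geq \lambda_D(r,v) > d_H^+(X)$, some $e = (x,y) \in E(D) \setminus E(H)$ has $x \in X$ and $y \notin X$. The main technical step --- and where I expect the principal difficulty --- is to show that the inclusion-maximality of $X$ forces $e$ to cross every minimum $r \to y$ cut of $H$: if some such cut $Y$ had $x \notin Y$, then uncrossing $X$ with $Y$ ought to produce either a minimum $r \to v$ cut properly containing $X$ (contradicting maximality) or a set violating $d_H^+(\cdot) \geq \lambda_H(r,v)$. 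Making this dichotomy rigorous, while handling the asymmetry between the sinks $v$ and $y$, is the delicate part.

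With the augmentation lemma, the quantity $\sum_{v \neq r} \lambda_H(r,v)$ strictly increases at every step and is bounded above by $\sum_{v \neq r} \lambda_D(r,v)$, so the process terminates in finitely many steps at a spanning $r$-flame witnessing the theorem.
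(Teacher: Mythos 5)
Your outline is fundamentally sound --- augment one edge at a time, observe that the flame condition at every vertex other than the head of the new edge $e$ collapses automatically, reduce the whole task to choosing $e$ so that it crosses every minimum $r\to y$ cut (where $y$ is the head of $e$), and extract $e$ from an extremal minimum cut for the witness $v$. This matches the structure of the paper's Lemma~3.1. However, you chose the wrong extremum: you take the inclusion-wise \emph{largest} source-side minimum $r\to v$ cut $X$, while the paper (in complementary language) takes the largest sink-side set, which corresponds to the \emph{smallest} source-side minimum cut. With the largest $X$, the uncrossing does not close: from $X$ and a minimum $r\to y$ cut $Y$ with $x\notin Y$, submodularity yields that $X\cap Y$ (which is contained in $X$) is again a minimum $r\to v$ cut, and $X\cup Y$ may simply equal $X$ when $Y\subsetneq X$; neither contradicts maximality. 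A concrete failure: let $H$ have edges $r\to a$, $a\to b$, $b\to d$, $r\to c$, $c\to d$ (a flame), and let $D$ add $a\to d$ and a second copy of $r\to a$, so $\lambda_D(d)=3>2=\lambda_H(d)$. The largest source-side minimum $r\to d$ cut is $X=\{r,a,b,c\}$, and the only edge of $D\setminus H$ leaving $X$ is $e=(a,d)$; but $\{r\}$ is also a minimum $r\to d$ cut of $H$ and $a\notin\{r\}$, so $e$ fails to cross it and $H+e$ has $\varrho(d)=3>\lambda(d)=2$, not a flame.

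With the \emph{smallest} source-side minimum $r\to v$ cut $X$, your uncrossing does work: for any minimum $r\to y$ cut $Y$ with $x\notin Y$, the set $X\cap Y$ contains $r$, misses both $v$ and $y$, and $x\in X\setminus Y$ forces $X\cap Y\subsetneq X$; submodularity together with $\delta_H(X\cap Y)\geq\lambda_H(r,v)$ and $\delta_H(X\cup Y)\geq\lambda_H(r,y)$ squeezes $\delta_H(X\cap Y)=\lambda_H(r,v)$, contradicting minimality of $X$. So the dichotomy you anticipated is not needed; the intersection alone suffices, but only at the other extreme. After this correction, your argument is a correct, purely cut-theoretic alternative to the paper's proof, which instead proves the stronger claim that $e$ is a coloop of the gammoid $\mathcal{G}_{H+e}(v)$ via an explicit path-rerouting along the largest tight set; the paper's version buys exactly the extra information needed for the greedoid augmentation property of Theorem~1.2, which your uncrossing alone does not give.
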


Calvillo-Vives rediscovered Theorem \ref{large flame} independently in \cite{calvillo-vives} and  named the 
rooted digraphs $ F $ 
with $ \lambda_{F}(r,v)= \varrho_F(v) $ for all $ v\in V(F)-r $ `$ r $-flames' . 

We establish a direct connection between the extremal problem 
above and the theory of greedoids. The latter were introduced by Korte and Lovász as a generalization of matroids to capture 
greedy solvability in problems where 
the matroid concept turned out 
to be too restrictive.  The field is actively investigated since the '80s, for a survey we refer to \cite{GreedoidBook}.  

We show that the subflames of a 
rooted digraph always form a greedoid whose bases are exactly the subdigraphs described in Theorem \ref{large flame}.

\begin{thm}\label{flame greedoid}
Let $ D=(V,E) $ be a digraph and $ r\in V $. Then 
\[ \mathcal{F}_{D,r}:=\{ E(F)\, |\,   F\subseteq D \text{ is an }r\text{-flame} \} \]
is a greedoid on $ E $. Furthermore, for each $ \subseteq $-maximal element $ E(F^{*}) $ of 
$ \mathcal{F}_{D,r} $ we have $ \lambda_{F^{*}}(r,v)=\lambda_D(r,v) $ for all $  v\in V-r $.
\end{thm}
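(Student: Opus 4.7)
My strategy is to verify the three greedoid axioms for $\mathcal{F}_{D,r}$ — that $\emptyset \in \mathcal{F}_{D,r}$ (trivial, since the empty subgraph is vacuously an $r$-flame), accessibility, and the exchange property — and then to derive the statement about $\subseteq$-maximal elements.

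The main reformulation, coming from Menger's theorem applied at each $v \in V-r$, is that $F \subseteq D$ is an $r$-flame if and only if
\[
\delta^-_F(X) \;\geq\; \max_{w \in X} \varrho_F(w) \quad \text{for every } \emptyset \neq X \subseteq V-r,
\]
where $\delta^-_F(X)$ denotes the number of edges of $F$ entering $X$. I will call $X$ \emph{tight with witness $w$} when equality $\delta^-_F(X) = \varrho_F(w)$ holds; such sets are precisely the minimum $r \to w$ cuts in $F$, and for each fixed $w$ they form a sublattice of $2^{V-r}$ under union and intersection by the standard submodularity-plus-uncrossing argument for $\delta^-_F$.

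For accessibility, given a nonempty flame $F$, I must find an edge $e = uv \in E(F)$ such that no tight set $X$ of size $\geq 2$ satisfies $v \in X$ and $u \notin X$; a direct computation shows this is exactly the condition needed for $F - e$ to remain a flame. My plan is to pick $X_0$, a tight set of minimum size at least two (if no such set exists, every edge is accessible), then choose $v \in X_0$ and an edge $e = uv \in E(F)$ with $u \in X_0 - v$. The minimality of $X_0$ together with the lattice structure of tight sets should preclude any other tight set of size $\geq 2$ from separating $v$ from $u$. The main obstacle is showing that such an edge $uv$ lying entirely inside $X_0$ really belongs to $E(F)$; I expect this to follow from a counting argument applied to $X_0$ together with subsets produced by uncrossing.

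The exchange property runs symmetrically: if $F_1, F_2$ are flames with $|E(F_1)| > |E(F_2)|$, then $\sum_v \varrho_{F_1}(v) > \sum_v \varrho_{F_2}(v)$, so some $v$ has $\varrho_{F_1}(v) > \varrho_{F_2}(v) = \lambda_{F_2}(r,v)$; among edges of $E(F_1) - E(F_2)$ entering such a $v$ one seeks an edge crossing every minimum $r \to v$ cut of $F_2$, and again the lattice of tight cuts pinpoints it. The characterization of $\subseteq$-maximal elements then falls out: if $F^*$ were maximal yet $\lambda_{F^*}(r,v) < \lambda_D(r,v)$ for some $v$, the same uncrossing argument applied to $D$ would produce an edge $e \in E(D) - E(F^*)$ with $F^* + e$ still a flame, contradicting maximality. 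The central difficulty throughout is organizing the interaction between tight cuts with different witnesses, and I expect the core technical lemma to be an uncrossing-style structure result that tames this interaction.
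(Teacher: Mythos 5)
Your high-level plan (uncross tight sets, use submodularity of the in-degree function, find an edge that crosses the right cut) is the correct machinery, but there are two concrete gaps. First, the paper's definition of a greedoid requires only $\varnothing\in\mathcal{F}$ plus the full augmentation property; accessibility then follows automatically by augmenting upward from $\varnothing$, so the separate accessibility argument you sketch — and explicitly leave unfinished (``I expect this to follow from a counting argument'') — is both incomplete and unnecessary once augmentation is proved in full. Second, and more seriously, your augmentation step contains a genuine error of formulation: you fix an imbalance vertex $v$ with $\varrho_{F_1}(v)>\varrho_{F_2}(v)$ and then look for an edge of $E(F_1)\setminus E(F_2)$ \emph{entering $v$} that crosses every minimum $r\to v$ cut of $F_2$. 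Such an edge need not exist: every extra $F_1$-edge into $v$ can have its tail inside the maximal tight set $U$ for $v$ in $F_2$ (e.g.\ two parallel copies of $r\to b$ and of $b\to a$ versus a single $r\to b\to a$ path: the only extra edge into $a$ comes from $b$, which lies inside $U=\{a,b\}$). The paper's Lemma~\ref{key lemma} avoids this by taking the maximal tight set $U$ for the imbalance vertex $u$ and then adding any edge $e\in E(F_1)\setminus E(F_2)$ that enters $U$; the head $v$ of $e$ is in general a \emph{different} vertex, which may itself have no in-degree imbalance. Your phrasing commits you to the wrong vertex.

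Beyond the choice of vertex, the core of Lemma~\ref{key lemma} is a path-surgery argument that your proposal does not mention: one takes a system $\mathcal{P}$ of $\lambda_{F_2}(u)+1$ edge-disjoint $r\to U$ paths in $F_2+e$ (using Menger and the maximality of $U$), notes that $\mathcal{P}$ must use all of $\mathsf{in}_{F_2+e}(U)$, and then, given any witness $\mathcal{Q}$ of some $I\in\mathcal{G}_{F_2}(v)$, reroutes each $Q\in\mathcal{Q}$ by gluing the appropriate $P\in\mathcal{P}$ to $Q$'s terminal segment after the last edge of $Q$ in $\mathsf{in}_{F_2}(U)$, and finally appends the $\mathcal{P}$-path ending in $e$. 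This is what establishes that $e$ is a coloop of $\mathcal{G}_{F_2+e}(v)$ and hence that $F_2+e$ is a flame (checking the other vertices $w\neq v$ is easy, since adding an edge cannot destroy a witnessing path system). Your ``the lattice of tight cuts pinpoints it'' gestures at the uncrossing setup but skips precisely this constructive step, which is where the theorem actually gets proved. The maximality claim at the end is then a direct corollary of the same lemma applied with $D$ in place of $F_1$, as you correctly indicate, but it inherits the same gap.
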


The proof of Theorem \ref{large flame} by Lov\'asz is algorithmic but only for simple digraphs polynomial. We prove a 
fractional generalization of Lovász' theorem considering digraphs with non-negative edge-capacities and replacing
`edge-connectivity' by `flow-connectivity'. Our proof provides a simple strongly polynomial algorithm to find an $ H$ with 
properties given in Theorem \ref{large flame}.

It is worth to mention that one can formulate a structural infinite generalization of Theorem \ref{large flame} in the same manner 
as Erdős conjectured such an extension of Menger's theorem (see \cite{aharoni2009menger}). As in the case of Menger's 
theorem, the problem is getting much harder in the infinite setting. The ``vertex-variant'' of this 
generalization  was proved for countably infinite digraphs in \cite{attila-flames} which was then further developed in 
\cite{erde2020enlarging}.

\section{Notation}
In this paper we deal only with finite combinatorial structures. An $ \mathcal{F}\subseteq 2^{E} $ is a greedoid on $ E $ if $ 
\varnothing\in \mathcal{F} $ and $ \mathcal{F} $ has the \emph{Augmentation property}, i.e.,
whenever $ F, F'\in 
\mathcal{F} $ with $ \left|F\right|<\left|F'\right| $, there is some $ e\in F'\setminus F $ such that $ F+e\in \mathcal{F} $.  
A digraph $ D $ is an ordered pair $ (V, E) $ where $ E $ is a set of directed edges with their endpoints in $ V $ where parallel 
edges are allowed but loops are not. Let us fix 
throughout this paper a vertex  set $ V $  and a ``root vertex'' $ r\in V $. For $ 
U\subseteq V $, $ \mathsf{in}_D(U) $ and $ \mathsf{out}_D(U) $ stand for the set of  ingoing and outgoing edges of $ U $ 
respectively, 
furthermore, let $ \varrho_D(U):=\left|\mathsf{in}_D(U)\right| $ and $ \delta_D(U):=\left|\mathsf{out}_D(U)\right| $. 
For simplicity we always assume that $ \mathsf{in}_D(r)=\varnothing $.  
We write shortly $ \lambda_D(v) $ for $ \lambda_D(r,v) $ where  $ v\in V-r $.  Recall, this is the
local edge-connectivity (i.e., the  maximal number of pairwise disjoint paths) from $ r $ to $ v $ . We 
define 
$ \mathcal{G}_{D}(v) $ to be the set of those  $ I \subseteq \mathsf{in}_D(v)  $ for which there exists a system $ \mathcal{P} $ 
of edge-disjoint $ r\rightarrow v $ paths where the set of the last edges of the paths in $ \mathcal{P} $ is $ I $. It is known that 
that set $ \mathcal{G}_{D}(v) $ is the family of independent sets of a matroid. Matroids representable this way were discovered 
by Perfect \cite{perfect1969independence}  and Pym \cite{pym1969proof} independently (using an equivalent definition based 
on  vertex-disjoint paths between vertex sets)  and are called 
gammoids. 
A digraph $ F $ is a flame if $ \mathcal{G}_{F}(v) $ is a free matroid\footnote{A free matroid is a matroid where all sets are 
independent.} for every 
$ v\in V-r $, equivalently $ 
\lambda_{F}(v)=\varrho_F(v) $ for every $ v\neq r $.
\section{The flame greedoid of a rooted digraph}

The core of the proof of Theorem \ref{flame greedoid} is the following lemma.
\begin{lem}\label{key lemma}
Let $ H $ and $ D $ be digraphs and assume that 
$\lambda_H(u)<\lambda_D(u)  $ for some $ u\in V-r $. Then there is an $e\in E(D)\setminus E(H) $ with head, say $ v $, such 
that 
$ e $ is a coloop\footnote{A coloop is an edge of a matroid which can be added to any independent set without ruin 
independence.} of  $ 
\mathcal{G}_{H+e}(v) $, i.e.,
\[ \mathcal{G}_{H+e}(v)\supseteq\{ I+e:I\in \mathcal{G}_{H}(v) \}.\]
\end{lem}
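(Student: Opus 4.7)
The plan is to reduce the coloop condition to a statement about largest minimum $r$-$v$ cuts, then to exhibit a witness edge using the particular cut $C^H_{\max}(u)$. Without loss of generality $H\subseteq D$ after replacing $D$ by the edge-union $H\cup D$, which preserves both $E(D)\setminus E(H)$ and the hypothesis $\lambda_H(u)<\lambda_D(u)$. I would first establish the reformulation: for $e=(x,v)\in E(D)\setminus E(H)$, the edge $e$ is a coloop of $\mathcal{G}_{H+e}(v)$ if and only if $\lambda_{H+e}(v)>\lambda_H(v)$. The forward direction is immediate; for the converse, any basis of $\mathcal{G}_{H+e}(v)$ has $\lambda_{H+e}(v)=\lambda_H(v)+1$ members and must therefore contain $e$ (otherwise the realizing path system lives in $H$ and uses only $\lambda_H(v)$ paths), which is precisely the coloop property. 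By max-flow/min-cut, $\lambda_{H+e}(v)>\lambda_H(v)$ amounts to $x\notin C^H_{\max}(v)$, where $C^H_{\max}(v)$ denotes the unique largest minimum $r$-$v$ cut of $H$, well-defined by submodularity of $\varrho_H$.

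Given this reformulation, choosing the witness is easy. I set $U:=C^H_{\max}(u)$; then $\varrho_H(U)=\lambda_H(u)<\lambda_D(u)\leq\varrho_D(U)$, so there is some $e=(x,v)\in\mathsf{in}_D(U)\setminus E(H)$, providing $v\in U$ and $x\notin U$. To conclude $x\notin C^H_{\max}(v)$, it suffices to prove the monotonicity claim: $v\in C^H_{\max}(u)$ implies $C^H_{\max}(v)\subseteq C^H_{\max}(u)$.

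I would prove this monotonicity via submodularity. Writing $C_v:=C^H_{\max}(v)$ and $C_u:=C^H_{\max}(u)=U$, both $C_v\cap C_u\ni v$ and $C_v\cup C_u\ni u$ avoid $r$, hence $\varrho_H(C_v\cap C_u)\geq\lambda_H(v)$ and $\varrho_H(C_v\cup C_u)\geq\lambda_H(u)$. Combined with submodularity,
\[ \lambda_H(v)+\lambda_H(u)=\varrho_H(C_v)+\varrho_H(C_u)\geq\varrho_H(C_v\cap C_u)+\varrho_H(C_v\cup C_u)\geq\lambda_H(v)+\lambda_H(u), \]
so all inequalities are equalities. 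In particular $C_v\cup C_u$ is a minimum $r$-$u$ cut, which by maximality of $C_u$ is contained in $C_u$, giving $C_v\subseteq C_u=U$. Consequently $x\notin U$ forces $x\notin C^H_{\max}(v)$, and $e$ is the desired coloop. The main substantive step is the monotonicity claim; once it and the reformulation are noted, the rest is immediate.
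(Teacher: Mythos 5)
Your proof is correct, and it diverges from the paper's in a substantive way. Both arguments start identically: take $U$ to be the $\subseteq$-largest set in $V-r$ containing $u$ with $\varrho_H(U)=\lambda_H(u)$ (well-defined by submodularity), and pick $e\in\mathsf{in}_D(U)\setminus E(H)$ using $\varrho_H(U)<\varrho_D(U)$. From there the paper verifies the coloop property \emph{explicitly}: it applies Menger's theorem in $H+e$ to obtain a system $\mathcal{P}$ of $\lambda_H(u)+1$ edge-disjoint $r\rightarrow U$ paths whose last edges exhaust $\mathsf{in}_{H+e}(U)$, and then, given any $I\in\mathcal{G}_H(v)$ witnessed by $\mathcal{Q}$, splices initial segments from $\mathcal{P}$ with terminal segments of $\mathcal{Q}$ (cut at the last crossing of $\mathsf{in}_H(U)$) to exhibit a witness for $I+e$. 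You instead observe that because $e$ ends at $v$, any path system realizing a set $I\subseteq\mathsf{in}_H(v)$ in $H+e$ cannot use $e$, so $\mathcal{G}_H(v)$ is exactly the restriction of $\mathcal{G}_{H+e}(v)$ to $\mathsf{in}_H(v)$; hence ``$e$ is a coloop of $\mathcal{G}_{H+e}(v)$'' is equivalent to ``$\lambda_{H+e}(v)>\lambda_H(v)$,'' which by max-flow/min-cut is equivalent to the tail $x$ of $e$ lying outside the largest minimum $r\mbox{-}v$ cut $C^H_{\max}(v)$. You then close the argument with the uncrossing/monotonicity fact $v\in C^H_{\max}(u)\Rightarrow C^H_{\max}(v)\subseteq C^H_{\max}(u)$, proved cleanly by submodularity. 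Your route replaces the concrete path-splicing with a purely structural cut argument; it is arguably shorter and more transparent, at the cost of the explicit constructive flavor (and the picture) the paper provides, which is also closer in spirit to the fractional analogue the paper needs later (Lemma~\ref{epsilon lemma}).
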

\begin{proof}
 Let $ \mathcal{U}:=\{ U\subseteq V-r: u\in U \text{ 
and } \varrho_H(U)=\lambda_{H}(u) \} $. By Menger's theorem 
$ \mathcal{U}\neq \varnothing $ and the submodularity of the map $ X \mapsto \varrho_H(X) $ ensures that $ 
\mathcal{U} $ is closed under union and intersection. Let $ U $ be the $ \subseteq $-largest element of $ \mathcal{U} $. Since
$\lambda_H(u)<\lambda_D(u)  $, there exists some edge $ e\in \mathsf{in}_{D}(U)\setminus \mathsf{in}_{H}(U)  $. Note 
that in $ H+e $ every  $ X\subseteq V-r $ with   $ X \supseteq U $ has at least $ 
\lambda_H(u)+1=\varrho_{H+e}(U) $ many ingoing 
edges because of the maximality of $ U $. 
By applying Menger's theorem in $ H+e $ with $ r $ and $ U $, we find a system $ \mathcal{P} $ of edge-disjoint  $ r\rightarrow 
U$ paths of size $ \lambda_H(u)+1 $ (see Figure \ref{Fig1}). The set 
of the last edges of the paths in $ \mathcal{P} $ is necessarily the whole $ \mathsf{in}_{H+e}(U) $. Let the 
head of $ e $ be $ v $ and let $ I\in \mathcal{G}_{H}(v) $ witnessed by the path-system $ \mathcal{Q} $. Clearly each $ Q\in 
\mathcal{Q} $ enters $ U $ at least once. For $ Q\in \mathcal{Q} $, we define $ f_{Q} $ as the last meeting of $ Q $ with 
$ \mathsf{in}_H(U) $. Finally, we build a path-system $ \mathcal{R} $ witnessing $ I+e\in \mathcal{G}_{H+e}(v) $ as follows. 
For $ Q\in 
\mathcal{Q} $, we consider the unique $ P_Q\in \mathcal{P} $ with last edge $ f_Q $ and concatenate it with the terminal 
segment of  $ Q $ from $ f_Q $ to obtain $ R_Q $. Moreover, let $ R_e $ be the unique path in $ \mathcal{P} $ with last edge $ 
e $. Then $ \mathcal{R}:=\{ R_Q: Q\in \mathcal{Q} \}\cup \{  R_e \} $ witnesses $ I+e\in   \mathcal{G}_{H+e}(v) $ as desired.

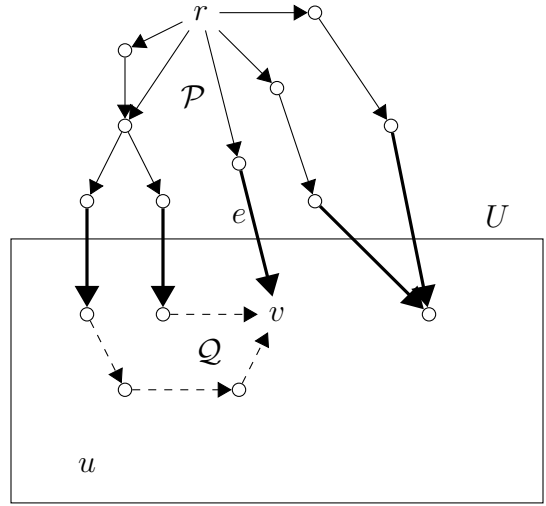
\begin{figure}[h]
\centering

\begin{tikzpicture}

\draw  (-2,0.5) rectangle (5,-3);
\node (v14) at (1.5,-0.5) {$v$};
\node at (-1,-2.5) {$u$};
\node (v1) at (0.5,3.5) {$r$};

\node[circle,inner sep=0pt,draw,minimum size=5] (v2) at (-0.5,3) {};
\node[circle,inner sep=0pt,draw,minimum size=5] (v3) at (-0.5,2) {};
\node[circle,inner sep=0pt,draw,minimum size=5] (v4) at (-1,1) {};
\node[circle,inner sep=0pt,draw,minimum size=5] (v5) at (-1,-0.5) {};
\draw [-triangle 60] (v1) edge (v2);
\draw [-triangle 60] (v2) edge (v3);
\draw [-triangle 60] (v3) edge (v4);
\draw [-triangle 60, very thick] (v4) edge (v5);

\node[circle,inner sep=0pt,draw,minimum size=5] (v6) at (0,1) {};
\node[circle,inner sep=0pt,draw,minimum size=5] (v7) at (0,-0.5) {};
\draw [-triangle 60] (v1) edge (v3);
\draw [-triangle 60] (v3) edge (v6);
\draw [-triangle 60, very thick] (v6) edge (v7);

\node[circle,inner sep=0pt,draw,minimum size=5]  (v8) at (1.5,2.5) {};
\node[circle,inner sep=0pt,draw,minimum size=5]  (v9) at (2,1) {};
\node[circle,inner sep=0pt,draw,minimum size=5] (v10) at (3.5,-0.5) {};
\draw [-triangle 60] (v1) edge (v8);
\draw [-triangle 60] (v8) edge (v9);
\draw [-triangle 60, very thick] (v9) edge (v10);

\node[circle,inner sep=0pt,draw,minimum size=5] (v11) at (2,3.5) {};
\node[circle,inner sep=0pt,draw,minimum size=5] (v12) at (3,2) {};
\draw [-triangle 60] (v1) edge (v11);
\draw [-triangle 60] (v11) edge (v12);
\draw [-triangle 60, very thick] (v12) edge (v10);

\node[circle,inner sep=0pt,draw,minimum size=5] (v13) at (1,1.5) {};
\node[circle,inner sep=0pt,draw,minimum size=5] (v15) at (-0.5,-1.5) {};
\node[circle,inner sep=0pt,draw,minimum size=5] (v16) at (1,-1.5) {};
\draw [-triangle 60] (v1) edge (v13);
\draw [-triangle 60, very thick] (v13) edge (v14);

\draw [-triangle 60, dashed] (v5) edge (v15);
\draw [-triangle 60, dashed] (v15) edge (v16);
\draw [-triangle 60, dashed] (v16) edge (v14);
\draw [-triangle 60, dashed] (v7) edge (v14);

\node at (1,0.8) {$e$};

\node at (4.4,0.8) {$U$};
\node at (0.4,2.4) {$\mathcal{P}$};
\node at (0.6,-1) {$\mathcal{Q}$};
\end{tikzpicture}
\caption{$ \mathsf{in}_{H+e}(U) $ consists of the thick edges,  the terminal segments of the paths in $ \mathcal{Q} $ are 
dashed. } \label{Fig1}

\end{figure}
\end{proof}
\begin{proof}[Proof of Theorem \ref{flame greedoid}]
Suppose that $ F_0, F_1\subseteq D $ are flames with $ \left|F_0\right|<  \left|F_1\right| $. Then there must 
be some $ u\in V-r $ for which $ \varrho_{F_0}(u)< \varrho_{F_1}(u)$. Since $ F_0 $ and $ F_1 $ 
are flames

 \[  \lambda_{F_0}(u)= \varrho_{F_0}(u)< \varrho_{F_1}(u)=\lambda_{F_1}(u).\]
 By applying Lemma \ref{key lemma} with $ F_0, F_1 $ and $ u $, we find an $ e\in E(F_1)\setminus (F_0) $ with head $ v $ 
 where $ 
 e $ is a coloop of $ 
 \mathcal{G}_{F_0+e}(v) $. On the one hand, $ \mathcal{G}_{F_0}(v) $ is a free matroid and  the previous 
 sentence ensures that $ \mathcal{G}_{F_0+e}(v) $ is free as well. On the other hand, for $ w\in V\setminus \{ r,v \} $ any 
 path-system 
 witnessing that $ \mathcal{G}_{F_0}(w) $ is a free matroid  shows the same for
 $ \mathcal{G}_{F_0+e}(w) $. By combining these we may conclude that $ F_0+e $ is a flame.
 
In order to prove the last sentence of Theorem \ref{flame greedoid}, let $ F^{*} $ be a maximal flame in $ D $ and suppose for a 
contradiction that 
$ \lambda_{F^{*}}(u)<\lambda_{D}(u)$ for some $ u\in V-r $. Applying Lemma \ref{key lemma} gives again some 
$ e\in E\setminus E(F^{*}) $ for which $ F^{*}+e $ is a flame contradicting the maximality of $ F^{*} $.
\end{proof}

\section{Fractional generalization and algorithmic aspects}
In this section we define a fractional version of Lovász's theorem and prove it by giving a strongly polynomial algorithm that finds 
a desired optimal substructure. We consider non-negative vectors indexed by 
the edge set $ E $ of a fixed digraph $ D=(V,E) $. This time we assume without loss of generality that $ D $ has no parallel edges 
because 
replacing a bunch of parallel 
edges by a single edge whose capacity is defined to be the sum of the capacities of those will be a meaningful reduction step in all 
the results we discuss.  For $ 
x,y\in 
\mathbb{R}_+^{E} $, we write $ x\leq y $ if $ x(e)\leq y(e) $ for every $ e\in E $ and for $ U\subseteq V $ let 
$\varrho_x(U):=\sum_{e\in \mathsf{in}_{D}(U)}x(e) $ and $ \delta_x(U):=\sum_{e\in \mathsf{out}_{D}(U)}x(e)  $. An $ x\in 
\mathbb{R}_+^{E}  $ is an $ r\rightarrow v $ flow if  $ \varrho_x(u)=\delta_x(u) $ holds for all $ u\in 
V\setminus \{ r,v \} $ and $ \varrho_x(r)=\delta_x(v)=0 $. We introduce some concepts and basic facts about flows, one can find 
more details and proofs for example in subsection 3.4 of \cite{frank2011connections}.  The \emph{amount} of the flow $ x $ is 
defined to be $ \delta_x(r) $ 
which is equal to $ \varrho_x(W)-\delta_x(W) $ for every choice of $ W\subseteq V-r $ containing $ v $.  Note that $ x $ can 
be written 
as the non-negative 
combination of directed cycles and $ r\rightarrow v $ paths (more precisely of their characteristic vectors). Such a decomposition 
can be found in a greedy way. The sum of the 
coefficients of the paths in any such a decomposition is again $ \delta_x(r) $.  For $ v\in V-r $ 
and $ c\in \mathbb{R}_+^{E}  $, the 
\emph{flow-connectivity} of $ c $
from $ r $ to $ v $ is
\[ \lambda_c(v):=\max \{ \delta_x(r)  : x\text{ is an }r\rightarrow v\text{ flow with }x\leq c. \} \]
The Max flow min cut theorem (see \cite{MFMC}) guarantees that  $ \lambda_c(v) $ is well-defined and equals to  
\[ \min \{ \varrho_c(W): W\subseteq V-r\text{ with }v\in W \}. \] For $v\in  V-r $ and $ c\in \mathbb{R}_+^{E}  $, we write
 $ \mathcal{G}_c(v) $ for the set of those vectors in $ \mathbb{R}_+^{\mathsf{in}_D(v)} $ that can be obtained as a restriction 
 of 
 an $  r\rightarrow v $ flow $ x\leq c $ to $ \mathsf{in}_D(v) $ that we denote by $ x \upharpoonright \mathsf{in}_D(v) $. It is 
 not too hard to prove that $ \mathcal{G}_c(v) $ is a polymatroid and it is natural to call it a \emph{polygammoid}.  An $ f\in 
 \mathbb{R}_+^{E}  $ is a \emph{fractional flame} 
if $ f\upharpoonright \mathsf{in}_D(v)\in  \mathcal{G}_f(v)  $ (equivalently $ \lambda_f(v)=\varrho_f(v) $) for all $ v\in V-r $.
For $ e\in E $, let $ \chi_e\in \mathbb{R}_+^{E}  $ be the vector where $ \chi_e(e')  $ is  $ 1 $ if $ e=e' $  and $ 0 $ 
otherwise.
We call a vector \emph{integral} if all of its coordinates are integers.

The fractional version of Lovász' theorem can be formulated in the following way.

\begin{thm}\label{large fractional flame Alg}
Let $ D=(V,E) $ be a digraph and $ r\in V $. Then for every  $ c\in \mathbb{R}_+^{E} $ there is an $ f\leq c $  such 
that for every $ v\in 
V-r $
\[  \lambda_c(v)=\lambda_f(v)=\varrho_f(v),\]
moreover, if   $ c $  is integral then $ f $ can be chosen to be integral. Such an $ f $ can be found in strongly polynomial time.
\end{thm}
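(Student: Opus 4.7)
My plan is to adapt the proof of Theorem~\ref{flame greedoid} to the fractional setting by replacing the combinatorial key lemma with a continuous analog, and to recast the resulting inductive argument as an iterative algorithm. The first step is to prove the following \emph{fractional key lemma}: if $f \in \mathbb{R}_{+}^{E}$ is a fractional flame with $f \leq c$ and $\lambda_{f}(u) < \lambda_{c}(u)$ for some $u \in V-r$, then there exist an edge $e \in E$ and an $\varepsilon > 0$ such that $f + \varepsilon \chi_{e} \leq c$ is again a fractional flame. By submodularity of $X \mapsto \varrho_{f}(X)$, the family of $u$-tight sets has a $\subseteq$-maximal element $U$ with $\varrho_{f}(U) = \lambda_{f}(u)$. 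Since $\varrho_{c}(U) \geq \lambda_{c}(u) > \lambda_{f}(u) = \varrho_{f}(U)$, there is some $e = (x, w) \in \mathsf{in}_{D}(U)$ with $f(e) < c(e)$. A short uncrossing argument, in the spirit of Lemma~\ref{key lemma}, shows that every $w$-tight set $X$ with respect to $f$ must be contained in $U$: if $X$ is $w$-tight with $X \not\subseteq U$, then submodularity forces $X \cup U$ to be $u$-tight and strictly larger than $U$, contradicting the maximality of $U$. Hence $x \notin X$ and $e \in \mathsf{in}_{D}(X)$ for every such $X$. Raising $f(e)$ by a small $\varepsilon > 0$ then increases the capacity of every $w$-tight cut by $\varepsilon$, so the new min $r$-to-$w$ cut equals $\varrho_{f}(w) + \varepsilon$, matching the new in-degree at $w$; at every other vertex $y \neq w$ nothing changes, and $f + \varepsilon \chi_{e}$ remains a fractional flame.

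Turning this into an algorithm, I would start from $f := 0$ (trivially a fractional flame) and iterate the fractional key lemma, always choosing the largest admissible $\varepsilon$. Each step requires a max-flow from $r$ to $u$ in $(D, f)$ to certify $\lambda_{f}(u) < \lambda_{c}(u)$ and to extract the maximal $u$-tight set $U$ from the residual graph; the choice of any $e = (x, w) \in \mathsf{in}_{D}(U)$ with $f(e) < c(e)$; and the computation of $\varepsilon := \min\{c(e) - f(e),\ M - \varrho_{f}(w)\}$, where $M$ is the minimum of $\varrho_{f}(X)$ over $X \ni \{w, x\}$ (computable by a single max-flow from $r$ to the super-sink $\{w, x\}$). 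The proof of the fractional key lemma shows $M > \varrho_{f}(w)$, so indeed $\varepsilon > 0$. When $c$ is integral, all cut capacities and all slacks are integers, so $\varepsilon$ is a positive integer at every iteration and $f$ stays integral throughout, giving the integrality part of the theorem.

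The main obstacle is to show that this iterative scheme is strongly polynomial, i.e., that only polynomially many iterations (in $|V|$ and $|E|$) are needed regardless of the numerical values of $c$. Each iteration ends either because the chosen edge $e$ becomes saturated ($f(e) = c(e)$, which can happen at most $|E|$ times in total since $f$ is monotonically non-decreasing and saturated edges are never chosen again) or because a new tight cut is created. A careful uncrossing argument on the newly produced tight cut should force some combinatorial structure --- for instance the maximal $u$-tight set $U$ --- to strictly shrink, yielding a polynomial bound on the number of tight-cut-creating iterations. Combined with polynomial-time max-flow subroutines for each step, this gives the claimed strongly polynomial running time.
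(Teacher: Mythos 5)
Your fractional key lemma is essentially the paper's Lemma~\ref{epsilon lemma}, and your plan to iterate it is natural, but the termination question you flag as ``the main obstacle'' is not an obstacle you clear — it is precisely the gap. The paper explicitly warns that its fractional analogue of Lemma~\ref{key lemma} (Lemma~\ref{epsilon lemma}) ``is not sufficient itself to provide the existence part'' of the theorem. Your iterative scheme — start from $f = 0$, repeatedly add $\varepsilon\chi_e$ with the largest admissible $\varepsilon$ — is analogous to Ford--Fulkerson with fat-pipe augmentation: the increments $\varepsilon$ may shrink geometrically, so the $\|f\|_1$-values are increasing and bounded but need not converge to the target, and the number of iterations is a priori unbounded in terms of $|V|$ and $|E|$ alone. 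Your sketch that ``a careful uncrossing argument should force some combinatorial structure to strictly shrink'' is exactly the hard part and is left unproved. Your integrality argument also does not rescue the complexity: if $\varepsilon$ is always a positive integer, that bounds the iteration count by $\|c\|_1$, which is pseudo-polynomial, not strongly polynomial.

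The paper circumvents this entirely by running the algorithm \emph{downwards} rather than upwards. It sets $f_0 := c$ and, for $k = 0, \dots, n-1$, computes a single max flow $z_{k+1} \leq f_k$ from $r$ to $v_{k+1}$ and overwrites $f_{k}$ on $\mathsf{in}_D(v_{k+1})$ by $z_{k+1}$, leaving everything else untouched. Lemma~\ref{side key lemma} (proved using Lemma~\ref{epsilon lemma}) shows this overwrite preserves every flow-connectivity $\lambda(u)$, and once a vertex's in-edges are fixed they are never touched again, so the loop runs exactly $n$ times. Strong polynomiality, existence, and integral preservation then all come for free (the latter by choosing integral max flows). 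This ``process each sink once, top-down'' structure is the idea your proposal is missing; without it, or a genuinely new potential/uncrossing argument bounding your number of $\varepsilon$-augmentations, your proof of the strong polynomiality (and even of the existence) is incomplete.
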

\begin{proof}
In the contrast of Theorem \ref{large flame}, the following fractional analogue of Lemma \ref{key lemma} is not 
sufficient itself to provide the existence part of Theorem \ref{large fractional flame Alg} but will be an important tool later.

\begin{lem}\label{epsilon lemma}
Let $ x, y\in \mathbb{R}_+^{E} $ such that 
$\lambda_y(u)<\lambda_x(u)  $ for some $ u\in V-r $. Then there is an $e\in E $ with  head, say $ v $,  and 
an $ \varepsilon>0 $ such that $  
x(e)-y(e)\geq\varepsilon $  and
\[ \mathcal{G}_{y+\varepsilon \chi_e}(v)=\{s+\delta \chi_e :s\in \mathcal{G}_{y}(v)\wedge 0\leq\delta \leq \varepsilon 
\}.\]
\end{lem}
\begin{proof}
The proof goes similarly as for Lemma \ref{key lemma}. By applying the Max flow min cut theorem and the submodularity of the 
function $ X\mapsto \varrho_y(X) $, we take the maximal  $ U\subseteq V-r $ with $ u\in U $ and $ \varrho_y(U)=\lambda_y(u) 
$. We pick some $ e\in \mathsf{in}_D(U) $ with $ x(e)>y(e) $ and let 
\[ \varepsilon:= \min \{x(e)-y(e),\  \varrho_y(W)-\varrho_y(U): U\subsetneq W\subseteq V-r \}. \]
Let $ p $ be an $ r\rightarrow u $ flow of maximal amount 
with respect to the capacity  $ y+\varepsilon\chi_e $ in the auxiliary digraph we obtain by contracting $ U $ to $ u $  while 
deleting the arising loops.  By defining $ p $ on the edges with both ends in $ U $ to be $ 0 $, we ensure $ p\in 
\mathbb{R}_{+}^{E} $.
The Max flow min cut theorem and the choice of  $ \varepsilon $ guarantee  that  
 \[ p\upharpoonright \mathsf{in}_D(U)=(y+\varepsilon\chi_e) \upharpoonright \mathsf{in}_D(U). \] 
 We may assume that $ p $  is a non-negative combination of $ 
 r\rightarrow U $ paths.  Let $ s\in \mathcal{G}_y(u) $ witnessed by the $ r\rightarrow u $ flow $ q $ which is a non-negative 
 combination of $ 
 r\rightarrow u $ paths. Take the sum of the terminal segments of these weighted paths from the last common edge with $ 
 \mathsf{in}_D(U) $ together with the trivial path $ e $ with a given weight $ \delta $ with $ 0\leq \delta \leq \varepsilon$ to 
 obtain a vector $ q' $. 
 Starting with $ p $ one can construct a $ p'\leq p $ which is a non-negative combination of $ 
  r\rightarrow U $ paths and for which $p'\upharpoonright \mathsf{in}_D(U)=q' \upharpoonright \mathsf{in}_D(U) $. It is easy to 
  see that the coordinate-wise maximum of $ p' $ and $ q' $ witnessing $ s+\delta \chi_e\in \mathcal{G}_{y+\varepsilon 
  \chi_e}(v) $.
\end{proof}
Now we turn to the description of the algorithm. Let $ V=\{ v_0,\dots, v_n \} $ where $ v_0=r $. The algorithm starts with $ 
f_0:=c $. If $ f_k\in 
\mathbb{R}_+^{E}  $ is already 
constructed and $ k<n $, then we take an  $ r\rightarrow v_{k+1} $ flow $ z_{k+1}\leq f_k $ of amount $ 
\lambda_{f_k}(v_{k+1}) $, 
which we 
choose to 
be integral if 
 $ f_k $ is integral, and define  \[ f_{k+1}(e):= \begin{cases} z_{k+1}(e) &\mbox{if } e\in \mathsf{in}_D(v_{k+1}) \\
f_k(e) & \mbox{otherwise.} 
\end{cases}  \]

Since the flow problem can be solved in strongly polynomial time, the algorithm described above is strongly polynomial with a 
suitable flow-subroutine. We claim that $ f_n $ satisfies the demands of Theorem \ref{large fractional flame Alg}. Since we start 
with $ c $ and lower some values in each step, $ f_n\leq c $ holds. If $ c\in \mathbb{Z}_+^{E}  $, then a straightforward 
induction shows that  $ f_n\in \mathbb{Z}_+^{E}  $. 

\begin{lem}\label{side key lemma}
If $ z\leq x $ is an $ r\rightarrow v $ flow of amount $ \lambda_x(v) $  and $ {y(e):=
\begin{cases} z(e) &\mbox{if } 
e\in \mathsf{in}_D(v) \\
x(e) & \mbox{otherwise}\end{cases}}  $ then $ \lambda_y(u)=\lambda_x(u) $ for every $ u\in V-r $.
\end{lem}
\begin{proof}
Suppose for a contradiction that there exists a $ u\in V-r $ with $ \lambda_y(u)<\lambda_x(u) $. Note that $ u\neq v $ because $ 
\lambda_x(v)=\lambda_y(v) $ is witnessed by $ z $.  By 
Lemma \ref{epsilon lemma}, there is an $e\in E $  and 
an $ \varepsilon $ such that $  
x(e)-y(e)>\varepsilon>0 $ (which implies that the head of $ e $ must be $ v $) and
$  \mathcal{G}_{y+\varepsilon \chi_e}(v)=\{s+\delta \chi_e :s\in \mathcal{G}_{y}(v)\wedge 0\leq\delta \leq \varepsilon 
\} $.   Let $ s_0:= z \upharpoonright \mathsf{in}_D(v)  $.
 \[\lambda_x(v)\geq \lambda_{y+\varepsilon \chi_e}(v)\geq \left|\left|s_0 \right|\right|_1+\varepsilon=\lambda_x(v)+\varepsilon  \]
 which is a contradiction.
\end{proof}

By applying Lemma \ref{side key lemma} with $ x=f_k,\ y=f_{k+1} $ and $ z=z_{k+1} $ we  obtain the following.

\begin{cor}\label{keep trim}
$ \lambda_{f_k}(v)=\lambda_{f_{k+1}}(v) $ for every $ k<n $ and $ v\in V-r $. 
\end{cor}
It follows by induction on $ k $ that $ 
\lambda_{f_k}(v) =\lambda_c(v)$  for every $ v\in V-r $ and $ k\leq n $. In particular $ \lambda_{f_n}(v)=\lambda_c(v) $ for all 
$ v\in V-r $. Let $ 1\leq k \leq n $ be arbitrary. Then $ 
\varrho_{f_k}(v_k)=\lambda_{f_k}(v_k) $ follows directly from the algorithm (the common value is $ \varrho_{z_k}(v_k) $). 
On the one hand, the left side is equal to $ \varrho_{f_n}(v_k) $ since in moving from $ f_k $ to $ f_n $ the algorithm no longer 
changes the values on the elements of $ \mathsf{in}_D(v_k) $. On the other hand,  we have seen that $ 
\lambda_{f_k}(v_k)=\lambda_{f_n}(v_k)=\lambda_c(v_k) $. By combines 
these we have $ \varrho_{f_n}(v)=\lambda_{f_n}(v) $  which completes the proof of Theorem \ref{large fractional flame Alg}.
\end{proof}

Finally, let us point out a special case of Lemma \ref{side key lemma}.
\begin{cor}
Let $ D $ be a directed graph and let $ \mathcal{P} $ be a maximal sized family of pairwise edge-disjoint $ r\rightarrow v $ paths 
in $ D $. Then the deletion of those ingoing edges of $ v $ that are unused by the path-family $ \mathcal{P} $ does not reduce 
any local 
edge-connectivities of the form $ \lambda_D(r,u) $ with $ u\in V(D)-r $.
\end{cor}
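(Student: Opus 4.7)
The plan is to derive this corollary as a direct integral specialization of Lemma \ref{side key lemma}. First I would set $x := \mathbf{1}_E \in \mathbb{Z}_+^E$, the all-ones capacity vector on the edge set of $D$, so that $\lambda_x(u) = \lambda_D(r,u)$ for every $u \in V(D) - r$ and $\lambda_x(v) = \lambda_D(r,v) = |\mathcal{P}|$ by Menger's theorem. Then I would let $z$ be the sum of the characteristic vectors of the paths in $\mathcal{P}$. Since $\mathcal{P}$ is edge-disjoint, $z \le x$; since every path in $\mathcal{P}$ goes from $r$ to $v$, $z$ is an $r \to v$ flow; and its amount is exactly $|\mathcal{P}| = \lambda_x(v)$, so the hypotheses of Lemma \ref{side key lemma} are met.

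The next step is to identify the resulting vector $y$ with the indicator of the edges of the digraph $D'$ obtained from $D$ by deleting the unused in-edges of $v$. Let $I \subseteq \mathsf{in}_D(v)$ denote the set of last edges of the paths in $\mathcal{P}$. Each path contributes precisely one edge of $\mathsf{in}_D(v)$ and these edges are distinct, so $z \upharpoonright \mathsf{in}_D(v) = \mathbf{1}_I$. The vector $y$ defined in Lemma \ref{side key lemma} therefore equals $\mathbf{1}_I$ on $\mathsf{in}_D(v)$ and $1$ on all other edges of $D$, which is precisely the characteristic vector of $E(D')$.

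Finally, Lemma \ref{side key lemma} gives $\lambda_y(u) = \lambda_x(u)$ for every $u \in V - r$, which translates back into $\lambda_{D'}(r,u) = \lambda_D(r,u)$, as desired. There is essentially no obstacle here — the only thing to check carefully is that the sum of the path indicators really is an $r \to v$ flow of the prescribed amount, which is immediate from edge-disjointness and the common endpoints $r$ and $v$ of the paths.
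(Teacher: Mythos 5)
Your proposal is correct and is exactly the intended specialization: the paper introduces this corollary with the phrase ``let us point out a special case of Lemma \ref{side key lemma}'' and gives no further proof, so filling in $x=\mathbf{1}_E$, $z=\sum_{P\in\mathcal{P}}\chi_P$, and identifying $y$ with the indicator of the reduced digraph is precisely what is meant. (The only point worth a passing glance is that the fractional section assumes $D$ has no parallel edges; if $D$ is a multigraph one first collapses parallel edges into capacities, after which the identification of $y$ with the pruned digraph still goes through.)
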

\section{Outlook}

By Theorem \ref{large fractional flame Alg}, finding a spanning subdigraph of a given digraph $ D $ that preserves all the local 
edge-connectivities from a prescribed root vertex $ r $ and has the fewest possible edges with respect to this property can be done 
in polynomial time. It is natural to ask the complexity of the weighted version:

\begin{quest}
What is the complexity of the following combinatorial optimization problem?\\
Input: digraph $ D $, $ r\in V(D) $ and cost function $ c: E(D)\rightarrow \mathbb{R}_+ $\\
Output: spanning subdigraph $ F $ of $ D $ with $ \lambda_F(r,v)=\lambda_D(r,v) $ for every $ v\in V(D)-r $ for which $ 
\sum_{e\in E(F)}c(e) $ is minimal with respect to this property.
\end{quest}
The special case where $ \lambda_D(r,v)  $ is the same for every $ v\in V(D)-r $ can be solved in polynomial time by using 
weighted matroid intersection (see \cite{MR0270945}).\\

There are more general flow models involving polymatroidal bounding functions (see for example \cite{polyflow} and 
\cite{quasipolyflow}). The 
Max flow min 
cut theorem is preserved under these models.

\begin{quest}
Is it possible to generalize Theorem \ref{large fractional flame Alg} by using the polymatroidal flow model introduced by  
Hassin in \cite{hassin1982minimum} (and rediscovered later  by Lawler and Martel 
in \cite{polyflow} independently)?
\end{quest}
\

The relation between matroids and polymatroids motivates the following concept of polygreedoids:
a \emph{polygreedoid} is a compact $ \mathcal{P}\subseteq \mathbb{R}_+^{E} $ such that 
\begin{enumerate}
\item[PG1] $ \underline{0}\in \mathcal{P} $,
\item[PG2]  whenever $ x,y\in 
\mathcal{P} $ with $\left|\left|x\right|\right|_1<\left|\left|y\right|\right|_1$, there is some $ e\in E $ with $ y(e)>x(e) $ such that $ 
x+\varepsilon \chi_e\in \mathcal{P} $ for all small enough $ \varepsilon>0 $.
\end{enumerate}

It follows directly from Lemma \ref{epsilon lemma} that fractional flames under a given bounding vector form a polygreedoid.
Greedoids have a property called \emph{accesibility} which can be considered as a weakening of the downward closedness of 
matroids. It tells that every $ F\in \mathcal{F} $ can be enumerated in such a way that each initial segment belongs to $ 
\mathcal{F} $, i.e.,  $ F=\{ e_1,\dots, e_n \} $ such that $ \{ e_1,\dots, e_k \}\in \mathcal{F} $ for every $ k\leq n $. 
Accessibility tends to be a part of the axiomatization of greedoids via the restriction the Augmentation axiom for pairs with 
$ \left|F'\right|=\left|F\right|+1 $. It is not too hard to prove that polygreedoids satisfy the following analogous property:  for every 
$ x\in \mathcal{P} $ there is a continues strictly increasing\footnote{Strictly increasing is meant with respect to the 
coordinate-wise partial ordering of $ 
\mathbb{R}_+^{E} $.} function $ g:[0,1]\rightarrow 
\mathcal{P} $ with $ g(0)= \underline{0}$ and $ g(1)=x $. Finally, let us end the paper with the following general question.

\begin{quest}
How much of the theory of greedoids is preserved for polygreedoids?
\end{quest}
\begin{bibdiv}
\begin{biblist}

\bib{lovasz}{article}{
   author={Lov\'{a}sz, L.},
   title={Connectivity in digraphs},
   journal={J. Combinatorial Theory Ser. B},
   volume={15},
   date={1973},
   pages={174--177},
   issn={0095-8956},
   review={\MR{325439}},
   doi={10.1016/0095-8956(73)90018-x},
}

\bib{calvillo-vives}{thesis}{
	author={Calvillo-Vives, Gilberto}, 
	title={Optimum branching systems}, 
	date={1978},
	type={Ph.D. Thesis},
	organization={University of Waterloo},
}

\bib{Greedoid Book}{book}{ author={Korte, Bernhard}, author={Lov\'{a}sz, L\'{a}szl\'{o}}, author={Schrader, Rainer}, 
title={Greedoids}, series={Algorithms and Combinatorics}, volume={4}, publisher={Springer-Verlag, Berlin}, date={1991}, 
pages={viii+211}, isbn={3-540-18190-3}, review={\MR{1183735}}, doi={10.1007/978-3-642-58191-5},}

\bib{aharoni2009menger}{article}{
   author={Aharoni, Ron},
   author={Berger, Eli},
   title={Menger's theorem for infinite graphs},
   journal={Invent. Math.},
   volume={176},
   date={2009},
   number={1},
   pages={1--62},
   issn={0020-9910},
   review={\MR{2485879}},
   doi={10.1007/s00222-008-0157-3},
}

\bib{attila-flames}{article}{
   author={Jo\'{o}, Attila},
   title={Vertex-flames in countable rooted digraphs preserving an Erd\H{o}s-Menger separation for each vertex},
   journal={Combinatorica},
   volume={39},
   date={2019},
   pages={1317--1333},
   doi={10.1007/s00493-019-3880-z},
}

\bib{erde2020enlarging}{article}{
    title={Enlarging vertex-flames in countable digraphs},
    author={Joshua Erde and J. Pascal Gollin and Attila Jo\'{o}},
    year={2020},
    journal={arXiv preprint arXiv:2003.06178},
    note={\url{https://arxiv.org/abs/2003.06178v1}}
}

\bib{perfect1969independence}{article}{
    title={Independence spaces and combinatorial problems},
     author={Perfect, Hazel},
     journal={Proceedings of the London Mathematical Society},
     volume={3},
     number={1},
     pages={17--30},
     year={1969},
     publisher={Wiley Online Library}
}

\bib{pym1969proof}{article}{
    title={A proof of the linkage theorem},
      author={Pym, JS},
      journal={Journal of Mathematical Analysis and Applications},
      volume={27},
      number={3},
      pages={636--638},
      year={1969},
      publisher={Elsevier}
    }

\bib{frank2011connections}{book}{
  title={Connections in combinatorial optimization},
  author={Frank, Andr{\'a}s},
  volume={38},
  year={2011},
  publisher={OUP Oxford}
}

\bib{MFMC}{book}{ author={Ford, L. R., Jr.}, author={Fulkerson, D. R.}, title={Flows in networks}, 
publisher={Princeton University Press, Princeton, N.J.}, date={1962}, pages={xii+194}, review={\MR{0159700}},}

\bib{MR0270945}{article}{
   author={Edmonds, Jack},
   title={Submodular functions, matroids, and certain polyhedra},
   conference={
      title={Combinatorial Structures and their Applications},
      address={Proc. Calgary Internat. Conf., Calgary, Alta.},
      date={1969},
   },
   book={
      publisher={Gordon and Breach, New York},
   },
   date={1970},
   pages={69--87},
   review={\MR{0270945}},
}

\bib{hassin1982minimum}{article}{
  title={Minimum cost flow with set-constraints},
  author={Hassin, Refael},
  journal={Networks},
  volume={12},
  number={1},
  pages={1--21},
  year={1982},
  publisher={Wiley Online Library}
}

\bib{polyflow}{article}{
   author={Lawler, E. L.},
   author={Martel, C. U.},
   title={Polymatroidal flows with lower bounds},
   note={Applications of combinatorial methods in mathematical programming
   (Gainesville, Fla., 1985)},
   journal={Discrete Appl. Math.},
   volume={15},
   date={1986},
   number={2-3},
   pages={291--313},
   issn={0166-218X},
   review={\MR{865009}},
   doi={10.1016/0166-218X(86)90050-8},
}
		
\bib{quasipolyflow}{article}{
   author={Kochol, M.},
   title={Quasi polymatroidal flow networks},
   journal={Acta Math. Univ. Comenian. (N.S.)},
   volume={64},
   date={1995},
   number={1},
   pages={83--97},
   issn={0862-9544},
   review={\MR{1360989}},
}

\end{biblist}
\end{bibdiv}
\end{document}